\providecommand{\U}[1]{\protect\rule{.1in}{.1in}}
\newtheorem{theorem}{Theorem}[section]
\newtheorem{corollary}[theorem]{Corollary}
\newtheorem{remark}[theorem]{Remark}
\newtheorem{definition}[theorem]{Definition}
\newcommand{\R}{{\mathbb R}}
\newcommand{\N}{{\mathbb N}}
\begin{document}

\title{{Lineability and spaceability for the weak form of Peano's theorem and vector-valued sequence spaces}}
\author{C. S. Barroso\thanks{Supported by CNPq Grant 307210/2009-0.}\,\,, G. Botelho\thanks{Supported by CNPq Grant 306981/2008-4.}\,\,, V. V. F\'avaro\thanks{Supported by FAPEMIG Grant CEX-APQ-00208-09.}\,\, and D. Pellegrino\thanks{Supported by  CNPq Grant 301237/2009-3. \hfill\newline2010
Mathematics Subject Classification: 15A03, 46B45, 34A12.}}
\date{}
\maketitle

\begin{abstract} Two new applications of a technique for spaceability are given in this paper. For the first time this technique is used in the investigation of the algebraic genericity property of the weak form of Peano's theorem on the existence of solutions of the ODE $u'=f(u)$ on $c_0$. The space of all continuous vector fields $f$ on $c_0$ is proved to contain a closed $\mathfrak{c}$-dimensional subspace formed by fields $f$ for which -- except for the null field -- the weak form of Peano's theorem fails to be true. The second application generalizes known results on the existence of closed $\mathfrak{c}$-dimensional subspaces inside certain subsets of $\ell_p(X)$-spaces, $0 < p < \infty$, to the existence of closed subspaces of maximal dimension inside such subsets.

\end{abstract}


\section{Introduction}
The notions of lineability and spaceability, as well as their applications, have been heavily studied by many authors in different settings lately, see, e.g., \cite{aron, ARON, LAA, pams, sinica, timoney, PAMSUCM} and references therein. The basic task in the field consists in finding linear structures, as large as possible, inside nonempty sets with certain properties. Usually, given a cardinal number $\mu$ and a (remarkable) subset $A$ of a topological vector space $E$, one wishes to show that $A \cup \{0\}$ contains a $\mu$-dimensional subspace of $E$. According to the usual terminology, $A$ is said to be:\vskip .2cm
\indent $\bullet$ {\it $\mu$-lineable} if $A \cup \{0\}$ contains a $\mu$-dimensional subspace of $E$;\vskip .2cm
\indent $\bullet$ {\it $\mu$-spaceable} if $A \cup \{0\}$ contains a closed $\mu$-dimensional subspace of $E$;\vskip .2cm
\indent $\bullet$ {\it maximal-spaceable} if it is $\mu$-spaceable with $\mu$ = dim$E$.\vskip .2cm


A standard methodology of verifying such properties consists in making a convenient manipulation of a single element of $A$ in order to define an injective linear operator $T \colon X \longrightarrow E$, where $X$ is an infinite dimensional Banach space, such that $T(X)\subseteq A \cup \{0\}$. In this case $A$ is dim$X$-lineable, and if $\overline{T(X)}\subseteq A \cup \{0\}$, then $A$ is dim$X$-spaceable. The starting element of $A$ can be called {\it mother vector}. The purpose of this paper is to discuss two new applications of the mother vector technique. First, it is explored in a situation it was never applied before (cf. Section \ref{cleon}). Thereafter, the mother vector technique is used to obtain maximal-spaceability in a framework more general than that where $\mathfrak{c}$-spaceability was obtained in \cite{LAA} (cf. Section \ref{sequences}). Next we briefly describe the results we prove by means of the mother vector technique.

Throughout this paper, $\mathfrak{c}$ will denote the cardinality of the continuum. Given a Banach space $X$, we denote by $\mathscr{K}(X)$ the set of all continuous vector fields $f \colon X \longrightarrow X$ for which the weak form of Peano's theorem, concerning the existence of local solutions of
\[
u^{\prime}(t)=f(u(t)),
\]
fails to be true. For more details on this current line of research and a historical account, we refer the reader to the recent contribution of H\'ajek and Johanis \cite{Hajek-Johanis} and references therein. In Section \ref{cleon} the mother vector technique is used to prove that $\mathscr{K}(c_0)$ is $\mathfrak{c}$-spaceable in the space $C(c_0)$ of all continuous vector fields on $c_{0}$ endowed with the topology of uniform convergence on bounded subsets. We call this type of property as the algebraic genericity of differential equations in $X$. The motivation comes from studies on the generic property of differential equations in Banach spaces (see \cite{Lasota-Yorke}). Our approach to prove this result is based on Dieudonn\'e's construction of vector fields on $c_0$ failing the classical Peano's theorem (cf. \cite{Dieudonne}). To the best of our knowledge, this is the first time spaceability is studied in this context.

Our next study concerns maximal-spaceability in the setting of vector-valued sequence spaces. In \cite{LAA} the mother vector technique was used to proved that if $X$ is a Banach space, then $\ell_p(X) - \bigcup\limits_{q < p}\ell_q(X)$, $0 < p < \infty$, and  $c_0(X) - \bigcup\limits_{q > 0}\ell_q(X)$ are $\mathfrak{c}$-spaceable. In Section 3 we refine the application of the mother vector technique in this context by proving that these sets are actually maximal-spaceable. Furthermore, it is proved that, for $1 \leq p < \infty$,  $\bigcap\limits_{p < q}\ell_q(X) - \ell_p(X)$ is maximal-spaceable in the Fr\'echet space $\bigcap\limits_{p < q} \ell_q(X)$. These results are obtained as particular cases of spaceability results we prove in the more general realm of $\left(\sum_n X_n \right)_p$-spaces. As far as we know, maximal-spaceability with dimension greater than $\mathfrak{c}$ was obtained before only in \cite{sinica} for sets of non-measurable functions.


\section{The weak form of Peano's theorem in $c_0$}\label{cleon}

Let $X$ be a Banach space and $f\colon\mathbb{R}\times X\longrightarrow X$ be a continuous
vector field on $X$. The weak form of Peano's theorem states that if $X$ is
finite-dimensional, then the ODE
\begin{align}
\label{eqn:1}u^{\prime}=f(t,u),
\end{align}
has a solution on some open interval $I$ in $\mathbb{R}$. The study of the failure of Peano's theorem in arbitrary infinite dimensional
linear spaces was started by Dieudonn\'e \cite{Dieudonne} in 1950. He
proved the existence of a continuous vector field $f\colon  c\sb 0\longrightarrow c\sb 0$
such that if $f(t,u):=f(u)$, then the Cauchy-Peano problem associated to (\ref{eqn:1}) has no local solution around the null vector of $\mathbb{R}\times c_0$. Subsequently, counterexamples in $\ell\sb 2$, Hilbert spaces and in
nonreflexive Banach spaces were obtained by Yorke \cite{Yorke}, Godunov
\cite{Godunov2} and Cellina \cite{Cellina}, respectively. Finally, in 1973 Godunov \cite{Godunov1}
proved that Peano's theorem holds true in $X$ if and only if $X$
is finite dimensional. Further negative answers were obtained by Astala
\cite{Astala2}, Shkarin \cite{Shkarin1,Shkarin2}, Lobanov
\cite{Lobanov1} and Lobanov and Smolyanov \cite{Lobanov2} in the setting of locally convex and Fr\'echet
spaces.

Let $C(X)$ denote the linear space of all continuous vector fields on $X$, which we endow with the linear topology
of uniform convergence on bounded sets. From the lineability and spaceability point of view, the following question emerges naturally: How large is the set $\mathscr{K}(X)$ of all fields $f$ in $C(X)$ for which (\ref{eqn:1}) has no local solution?
Following the historical development of the subject, it is natural to investigate the case $X = c_0$ first. In this section we give a major step in the solution of the aforementioned question by proving that the set $\mathscr{K}(c_0)\cup\{0\}$ contains a closed $\mathfrak{c}$-dimensional subspace of $C(c_0)$.

\begin{theorem} The set of continuous vector fields on $c_0$ failing the weak form of Peano's theorem is $\mathfrak{c}$-spaceable in $C(c_0)$.\label{thm:1}
\end{theorem}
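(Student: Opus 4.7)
The plan is to use the mother vector technique with Dieudonn\'e's vector field playing the role of the mother. Recall that the continuous field $\Phi\colon c_0\to c_0$ given by $\Phi(x)_n=\sqrt{|x_n|}+1/n$ fails the weak Peano theorem: if $u=(u_n)$ were a local $c_0$-valued solution of $u'=\Phi(u)$, $u(0)=0$, then $u_n'(0)=1/n>0$ would force $u_n>0$ for small $t>0$, after which $u_n'\geq\sqrt{u_n}$ gives $\tfrac{d}{dt}(2\sqrt{u_n})\geq 1$, so $u_n(t)\geq t^2/4$ for \emph{every} $n$ and every small $t>0$, contradicting $u(t)\in c_0$. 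The same comparison shows that whenever a scalar function satisfies $u_n'=c\bigl(\sqrt{|u_n|}+1/n\bigr)$ with $c\neq 0$, one has $|u_n(t)|\geq c^{2}t^{2}/4$ uniformly in $n$.

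Fix a partition $\N=\bigsqcup_{j\geq 1}N_j$ into infinitely many infinite subsets and, for $n\in\N$, let $j(n)$ be the unique index with $n\in N_{j(n)}$. Define continuous fields $g_j\colon c_0\to c_0$ by
\[
g_j(x)_n=\begin{cases}\sqrt{|x_n|}+1/n,&n\in N_j,\\ 0,&n\notin N_j,\end{cases}
\]
and, for $a=(a_j)\in\ell_\infty$, set
\[
T(a)(x)_n\ :=\ a_{j(n)}\bigl(\sqrt{|x_n|}+1/n\bigr),\qquad n\in\N.
\]
Since $a$ is bounded and $\sqrt{|x_n|}+1/n\to 0$ for $x\in c_0$, one has $T(a)(x)\in c_0$; the $\tfrac{1}{2}$-H\"older inequality $\bigl|\sqrt{|s|}-\sqrt{|t|}\bigr|\leq\sqrt{|s-t|}$ shows $T(a)\in C(c_0)$; and the identity $T(a)(0)_n=a_{j(n)}/n$ shows $T\colon\ell_\infty\to C(c_0)$ is linear and injective.

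For the closed ball $B_R$ of radius $R$ in $c_0$, a swap of suprema yields
\[
\|T(a)\|_{B_R}\;=\;\sup_{x\in B_R}\|T(a)(x)\|_\infty\;=\;\sup_n|a_{j(n)}|(\sqrt R+1/n),
\]
and hence the two-sided bound $\sqrt R\,\|a\|_\infty\leq\|T(a)\|_{B_R}\leq(\sqrt R+1)\,\|a\|_\infty$ for every $R>0$. Thus $T$ is a topological embedding of the Banach space $\ell_\infty$ into the Fr\'echet space $C(c_0)$, so $T(\ell_\infty)$ is a closed subspace of $C(c_0)$ of Hamel dimension $\dim_\R\ell_\infty=\mathfrak{c}$.

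The main step, and the one requiring the most care, is to show that $T(a)\in\mathscr{K}(c_0)$ for every $a\in\ell_\infty\setminus\{0\}$. Pick $j_0$ with $a_{j_0}\neq 0$ and suppose, toward a contradiction, that a $c_0$-valued local solution $u$ of $u'=T(a)(u)$, $u(0)=0$, exists on some open interval around $0$. The disjointness of the supports of the $g_j$ makes the coordinate equations decouple: for each $n\in N_{j_0}$ one has $u_n'=a_{j_0}\bigl(\sqrt{|u_n|}+1/n\bigr)$, and the observation from the first paragraph (after replacing $u_n$ by $-u_n$ if $a_{j_0}<0$) yields $|u_n(t)|\geq a_{j_0}^{2}\,t^{2}/4>0$ for every $n\in N_{j_0}$ and every small $t>0$. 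Because $N_{j_0}$ is infinite, this contradicts $u(t)\in c_0$, and the theorem follows.
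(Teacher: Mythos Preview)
Your argument is correct and uses the same mother vector and the same partition $\N=\bigsqcup_j N_j$ as the paper, but it takes a genuinely cleaner route at the linear-algebra step. The paper maps $\ell_1$ into $C(c_0)$ by $L\bigl((a_i)\bigr)=\sum_i a_i\,\mathbb{N}_i f$, then takes the closure $\overline{L(\ell_1)}$ and spends most of the proof analysing an arbitrary limit point $h$: one must identify a block $\mathbb{N}_m$ on which the coordinates of $h$ equal a fixed nonzero multiple of the mother field, and only then run the ODE contradiction. You instead map from $\ell_\infty$ and observe the two-sided bound $\sqrt{R}\,\|a\|_\infty\le\|T(a)\|_{B_R}\le(\sqrt{R}+1)\|a\|_\infty$, which makes $T$ a topological embedding of a Banach space into the Fr\'echet space $C(c_0)$; hence $T(\ell_\infty)$ is already closed and no closure analysis is needed. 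This is exactly the ``right'' domain: the same estimate shows that the paper's operator $L$, restricted to $\ell_1$, is \emph{not} bounded below in the $\ell_1$-norm, so the closure step there is unavoidable. Your choice trades the limit-point argument for a one-line embedding and then only has to treat fields of the explicit form $T(a)$ in the ODE part.

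One small point worth tightening: as stated, you derive the contradiction only for the initial value problem $u(0)=0$, whereas the paper's formulation of $\mathscr{K}(c_0)$ (no solution on \emph{any} open interval) and its proof treat an arbitrary solution $u$ on $I$ with arbitrary value $b=u(a)$. Your comparison $|u_n(t)|\ge a_{j_0}^2 t^2/4$ uses $u_n(0)=0$. The fix is immediate via the integral estimate the paper quotes from Dieudonn\'e: for $c>0$ and any $t>t_0$ in $I$,
\[
t-t_0=\int_{u_n(t_0)}^{u_n(t)}\frac{dx}{c\bigl(\sqrt{|x|}+1/n\bigr)}\le \frac{2}{c}\Bigl(\sqrt{|u_n(t)|}+\sqrt{|u_n(t_0)|}\Bigr),
\]
which, since $(u_n(t_0))_{n\in N_{j_0}}\in c_0$, forces $\liminf_{n\in N_{j_0}}|u_n(t)|>0$, a contradiction. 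With this adjustment your proof matches the paper's conclusion while remaining shorter.
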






\begin{proof} Let $(e_n)_{n=1}^\infty$ be the canonical unit vectors of sequence spaces and define
the vector field $f\in C(c\sb0)$ by
\[
f\left(\sum\sb{n=1}^\infty x\sb ne\sb n\right)=\sum\sb{n=1}^\infty\left(\sqrt{|x\sb n|}+\frac{1}%
{n+1}\right)e\sb n.
\]
By \cite{Dieudonne} it follows that $f\in \mathscr{K}(c\sb0)$.
Split $\mathbb{N}$ into countably many infinite pairwise disjoint subsets
$(\mathbb{N}_{i})_{i=1}^{\infty}$. For every $i\in\mathbb{N}$ set
$\mathbb{N}_{i}=\{i_{1}<i_{2}<\ldots\}$ and define the spreading function
$\mathbb{N}\sb if\colon c\sb0\longrightarrow c\sb0$ of $f$ over $\mathbb{N}\sb i$
by
\[
\mathbb{N}\sb if(x)=%
{\displaystyle\sum\limits_{n=1}^{\infty}}
f_{i_n}\left(  x\right)  e_{i_{n}},
\]
where $f_{n}\left(  x\right)  =\sqrt{|x\sb n|}+\frac{1}{n+1},$ for all
$n\in\mathbb{N}.$ 
Let us see that the map%
$$ L  \colon \ell_{1}\longrightarrow C(c_{0})~,~ L\left(  (  a_i)_{i=1}^\infty\right)    =\sum \limits_{i=1}^{\infty}
a_{i}\mathbb{N}\sb if,
$$
is well defined. Indeed, given $ (  a_i)_{i=1}^\infty \in \ell_1$ and $x\in c_{0}$, for every $m \in \mathbb{N}$ we have (the sup norm on $c_0$ is simply denoted by $\|\cdot\|$)%
\begin{align*}
\left\Vert
{\displaystyle\sum\limits_{i=1}^{m}}
a_{i}\mathbb{N}\sb if(x)\right\Vert    \leq%
{\displaystyle\sum\limits_{i=1}^{m}}
\left\vert a_{i}\right\vert \cdot \left\Vert \mathbb{N}\sb if(x)\right\Vert   \leq%
{\displaystyle\sum\limits_{i=1}^{m}}
\left\vert a_{i}\right\vert\cdot \left\Vert f(x)\right\Vert  =\left\Vert f(x)\right\Vert \left(
{\displaystyle\sum\limits_{i=1}^{m}}
\left\vert a_{i}\right\vert\right) .
\end{align*}
Making $m\longrightarrow\infty$ we conclude that $%
{\sum\limits_{i=1}^{\infty}}
a_{i}\mathbb{N}\sb if(x)\in c_{0}.$ Now let us show that $L\left(  \left(
a_{i}\right)  _{i=1}^\infty\right)  \in C(c_{0})$. For all $x,y\in c_{0}$ and
$m\in\mathbb{N}$, %
\begin{align*}
\left\Vert
{\displaystyle\sum\limits_{i=1}^{m}}
a_{i}\mathbb{N}\sb if(x)-%
{\displaystyle\sum\limits_{i=1}^{m}}
a_{i}\mathbb{N}\sb if(y)\right\Vert  &  \leq%
{\displaystyle\sum\limits_{i=1}^{m}}
\left\vert a_{i}\right\vert\cdot \left\Vert \mathbb{N}\sb if(x)-\mathbb{N}%
\sb if(y)\right\Vert \\
&  \leq\left\Vert f(x)-f(y)\right\Vert
\left(
{\displaystyle\sum\limits_{i=1}^{m}}
\left\vert a_{i}\right\vert \right).
\end{align*}
Then by taking the limit as $m\longrightarrow\infty$, the continuity of $%
{\sum\limits_{i=1}^{\infty}}
a_{i}\mathbb{N}\sb if$ follows from the continuity of $f$. Since $L$  is well defined, its linearity and injectivity are clear, so the range space
$L(\ell_{1})$ is algebraically isomorphic to $\ell_{1}$. We claim that%
\begin{equation}\label{jj}
\overline{L(\ell_{1})}\subseteq\mathscr{K}\sb(c\sb 0)\cup\{0\}.
\end{equation}
Let $h=(h_{i})_{i=1}^\infty\in\overline{L(\ell\sb1)}$ be arbitrary. We may assume that $h\neq 0$, so there is $r\in\mathbb{N}$ such
that $h_{r}\neq0.$ Using the decomposition $\mathbb{N}=\bigcup\limits
_{j=1}^{\infty}\mathbb{N}_{j}$ there are (unique) $m,s\in\mathbb{N}$ such that $e_{m_{s}}=e_{r}$. Let $(x\sb
k)\sb {k=1}^\infty =\left((a\sb i^{k})\sb {i=1}^\infty\right)\sb {k=1}^\infty$ be a sequence in $\ell\sb1$ so that
\[
L\left(x\sb k\right)=\sum\sb{j=1}^{\infty}a\sb j^{k}\mathbb{N}\sb jf\stackrel{k \rightarrow \infty}{\longrightarrow}
h\text{ in }C(c\sb0).
\]
Letting $L\sb n(x\sb k)$ denote the $n$-th coordinate of $L(x\sb k)$, for
each $N\in\mathbb{N}$ we have that
\[h_n = \lim\limits_{k \rightarrow \infty} L\sb n(x\sb k)
\]
uniformly in the ball $B\sb{c\sb 0}(N):=\{x \in c_0 : \|x\| \leq N\}$. Since $
L_{i_{j}}(x\sb k)=a_{i}^{k}f_{i_j}$ for all $i,j\in\mathbb{N}$, it follows that
\[
a_{i}^{k}f_{i_j}(x)=L_{i_{j}}(x\sb k)(x)\stackrel{k \rightarrow \infty}{\longrightarrow}
h_{i_j}(x)
\]
for each $x\in B\sb{c\sb 0}(N)$.
In particular,
\begin{equation} \label{refnova}a_{m}^{k}f_{m_j}(x)=L_{m_j}(x\sb k)(x)\stackrel{k \rightarrow \infty}{\longrightarrow}
h_{m_j}(x)
\end{equation}
for every $j$ and every $x\in B\sb{c\sb 0}(N)$; and making $j = s$ we get
\begin{equation} \label{novanova}a_{m}^{k}f_{r}(x)= a_{m}^{k}f_{m_s}(x)\stackrel{k \rightarrow \infty}{\longrightarrow}
h_{m_s}(x)= h_{r}(x)
\end{equation}
for each $x\in B\sb{c\sb 0}(N)$. Choosing $x_0\in c_0$ such that $h_r(x_0) \neq 0$ and $N_0 \in \mathbb{N}$ such that $x_0 \in B\sb{c\sb 0}(N_0)$ it follows that
\[
a_r := \lim\limits_{k \rightarrow \infty}a_{m}^{k}= \frac{h\sb r(x_0)}{f_{r}(x_0)}\neq 0.%
\]
Thus (\ref{novanova}) implies that $h\sb r(x)=a\sb rf_{r}(x)$ for all
$x\in B\sb{c\sb 0}(N)$. As $N$ is arbitrary, we have $h\sb r(x)=a\sb rf_{r}(x)$ for every $x\in c\sb0$.
Since for every $j,k\in \mathbb{N}$ the $m_{j}$-th coordinate of $L\left(x\sb k\right)$ is $a_{m}^{k}f_{m_j}$, by (\ref{refnova}) we have that $h\sb{m_{j}}(x)=a\sb rf_{m_j}(x)$, for all $j\in \mathbb{N}$. 

Now we are ready to prove that $h \in \mathscr{K}(c\sb0)$. We proceed by contradiction using the ODE
approach from \cite{Dieudonne}. Assume that $u(t)=\left(  u_{n}(t)\right)  _{n=1}^\infty$ is a solution of (\ref{eqn:1}) on some interval $I\subset \mathbb{R}$. Fix any $a\in I$ and write $b=u(a)=(b_i)_{i=1}^\infty$. In this case we have
\[
u_{m_j}^{\prime}(t) =h_{m_j}(u(t)) = a_rf_{m_j}(u(t)) = a_r\left(\sqrt{|u_{m_j}(t)|}+\frac{1}{m_j+1}\right),\]
and $u_{m_j}(a)=b_{m_j}$ for all $j\in \mathbb{N}$ and $t \in I$. In summary, each $u_{m_j}$ is a solution of the Cauchy problem
\begin{equation}\label{eqnova}
u_{m_j}^{\prime}(t) = a_r\left(\sqrt{|u_{m_j}(t)|}+\frac{1}{m_j+1}\right),~ u_{m_j}(a)=b_{m_j},
\end{equation}
for all $j\in\mathbb{N}$ and all $t\in I$, and hence for all $t\in\mathbb{R}$.
Let us recall the original argument of Dieudonn\'{e} \cite{Dieudonne}: if $\alpha,\beta\in\mathbb{R}$ and $\gamma>0$ then
\[\int_\alpha^\beta\frac{dx}{\sqrt{|x|}+\gamma}\leq 2(\sqrt{|\alpha|}+|\sqrt{\beta|}).
\]
Thus if $u'(t)=\lambda\left(\sqrt{|u(t)|}+\gamma\right)$ with $\gamma,\lambda>0$,  $t\geq t_0$ and $u(t_0)=y_0$, then
\begin{align*}
t-t_0=\int_{t_0}^t\frac{u'(s)ds}{\lambda\left(\sqrt{|u(s)|}+\gamma\right)}= \frac{1}{\lambda}\int_{u(t_0)}^{u(t)}\frac{dx}{\sqrt{|x|}+\gamma}
\leq \frac{2}{\lambda}\left(\sqrt{|u(t)|}+\sqrt{|u(t_0)|}\right) .
\end{align*}
In view of (\ref{eqnova}), if $a_r>0$ then $$0< \frac{a_r(t-a)}{2}\leq \sqrt{|u_{m_j}(t)|} + \sqrt{|u_{m_j}(a)|}$$ for all $t>a$ and all $j\in\mathbb{N}$. This contradiction -- remember that $(u_{m_j}(t))_{j\in\mathbb{N}}\in c_{0}$ for $t\in I$ -- shows that $h\in \mathscr{K}(c\sb0)$.
If $a_r<0$, we can define $v_{m_j}(t)=u_{m_j}(-t)$ for all
$t\in\mathbb{R}$ and $j\in\mathbb{N}$. Applying once more Dieudonn\'e's argument  we get %
\[0<
\frac{-a_r(t-a)}{2}\leq \sqrt{|u_{m_j}(-t)|} + \sqrt{|u_{m_j}(-a)|} \]for all $ t>a$ and  all $j\in\mathbb{N}$;
which is impossible since $\left(  u_{m_j}(-t)\right)  _{j\in\mathbb{N}}\in c_{0}$ for $t \in I$. Hence $h\in \mathscr{K}(c\sb0)$.

So (\ref{jj}) is established, proving that $\overline{L(\ell_1)}$ is a closed $\mathfrak{c}$-dimensional subspace of $C(c_0)$ contained in $\mathscr{K}(c\sb0)\cup\{0\}$. 
\end{proof}


\section{Vector-valued sequence spaces}\label{sequences}
Let $(X_n)_{n=1}^\infty$ be a sequence of Banach spaces over $\mathbb{K} = \mathbb{R}$ or $\mathbb{C}$. Given $0 < p < \infty$, by $\left(\sum_n X_n \right)_p$ we mean the vector space of all sequences $(x_n)_{n=1}^\infty$ such that $x_n \in X_n$ for every $n$ and
$$\|(x_n)_{n=1}^\infty\|_p := \left(\sum_{n=1}^\infty \|x_n\|_{X_n}^p\right) < \infty. $$
It is well known that $\left(\left(\sum_n X_n \right)_p, \|\cdot\|_p\right)$ is a Banach ($p$-Banach if $0 < p <1$) space. Making the obvious modification for $p = 0$ we get the Banach space $\left(\sum_n X_n \right)_0$ of norm null sequences with the sup norm. In this fashion,
$$\left(\textstyle\sum_n X_n \right)_p^- := \textstyle\bigcup\limits_{0 < q < p} (\sum_n X_n)_q $$
can be regarded as a subspace of $(\sum_n X_n)_p$ and $\bigcup\limits
_{p>0}\left(\sum_n X_n \right)_p$ can be regarded as a subspace of $\left(\sum_n X_n \right)_0$.

For a Banach space $X$, the usual $X$-valued sequence spaces $\ell_p(X)$ and $c_0(X)$, as well as their corresponding subspaces $\ell_p^-(X):= \bigcup\limits
_{0<q<p}\ell_{q}(X)$ and $\bigcup\limits
_{p>0}\ell_{p}(X)$, are recovered putting $X_n = X$ for every $n$. In \cite{LAA} it is proved that $\ell_p(X) - \ell_p^-(X)$ and $c_0(X) - \bigcup\limits
_{p>0}\ell_{p}(X)$ are $\mathfrak{c}$-spaceable. In this section we shall prove that these sets are actually maximal-spaceable (cf. Corollary \ref{cor}). This information will be obtained as a particular case of spaceability results in the more general realm of $\left(\textstyle\sum_n X_n \right)_p$-spaces.

\begin{definition}\rm Given a Banach space $X$, a family $(X_i)_{i \in I}$ of Banach spaces is said to {\it contain isomorphs of $X$ uniformly} if there are $\delta > 0$ and a family of isomorphisms into $R_i \colon X \longrightarrow X_i$ such that $\min\{ \|R_i\|,\|R_i^{-1}\|\} \leq \delta$ for every $i \in I$.
\end{definition}

\begin{theorem}\label{theo3}
Let $(X_n)_{n=1}^\infty$ be a sequence of Banach spaces that contains a subsequence containing isomorphs of the infinite dimensional Banach space $X$ uniformly. Then:\\
{\rm (a)} $\left(\sum_n X_n \right)_p- \left(\sum_n X_n \right)_p^-$ is {\rm dim}$X$-spaceable for every $0 < p < \infty$.\\
{\rm (b)} $\left(\sum_n X_n \right)_0 - \bigcup\limits
_{p>0}\left(\sum_n X_n \right)_p$ is {\rm dim}$X$-spaceable.
\end{theorem}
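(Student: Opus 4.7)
The strategy is the mother vector technique: construct an injective bounded-below linear map $T \colon X \to \left(\sum_n X_n\right)_p$ whose image, apart from $0$, lies in the desired set, so that $T(X)$ is automatically a closed $\dim X$-dimensional subspace contained in $\left(\sum_n X_n\right)_p \setminus \left(\sum_n X_n\right)_p^-$ together with $0$. First, invoke the hypothesis to extract a subsequence $(X_{n_k})_k$ and isomorphisms into $R_k \colon X \to X_{n_k}$ with $\min\{\|R_k\|,\|R_k^{-1}\|\} \leq \delta$. A pigeonhole argument on a further sub-subsequence (splitting according to which of $\|R_k\|$ or $\|R_k^{-1}\|$ is bounded by $\delta$), combined if necessary with a scalar rescaling of the chosen isomorphisms, should produce constants $\alpha,\beta > 0$ with $\alpha\|x\| \leq \|R_k(x)\|_{X_{n_k}} \leq \beta\|x\|$ for every $k$ and every $x \in X$.

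For part (a), pick a ``boundary'' scalar sequence $(a_k)$ of positive reals with $\sum_k a_k^p < \infty$ but $\sum_k a_k^q = \infty$ for every $0 < q < p$ (for instance $a_k = k^{-1/p}[\log(k+1)]^{-2/p}$), and define $T(x)_{n_k} = a_k R_k(x)$ and $T(x)_n = 0$ for $n \notin \{n_k : k \in \mathbb{N}\}$. The upper bound and $\sum_k a_k^p < \infty$ give $\|T(x)\|_p \leq \beta\bigl(\sum_k a_k^p\bigr)^{1/p}\|x\|$, so $T$ is well defined, linear and continuous; the lower bound gives $\|T(x)\|_p \geq a_1\alpha\|x\|$, so $T$ is bounded below and $T(X)$ is a closed subspace of $\left(\sum_n X_n\right)_p$ with $\dim T(X) = \dim X$. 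For $x \neq 0$ and $0 < q < p$, the same lower bound yields $\sum_k a_k^q \|R_k(x)\|^q \geq \alpha^q\|x\|^q \sum_k a_k^q = \infty$, so $T(x) \notin \left(\sum_n X_n\right)_q$; hence $T(X)\setminus\{0\} \subseteq \left(\sum_n X_n\right)_p \setminus \left(\sum_n X_n\right)_p^-$.

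Part (b) follows the same template after replacing $(a_k)$ by a positive sequence in $c_0 \setminus \bigcup_{p>0}\ell_p$, for instance $a_k = 1/\log(k+1)$: the upper bound gives $a_k\|R_k(x)\| \leq \beta a_k\|x\| \to 0$, placing $T(x)$ in $\left(\sum_n X_n\right)_0$, while the lower bound gives $\sum_k a_k^p\|R_k(x)\|^p \geq \alpha^p\|x\|^p\sum_k a_k^p = \infty$ for every $p > 0$ and every $x \neq 0$; the remaining verifications are unchanged. The main technical obstacle is the first step: passing from the weak $\min$-condition to genuine two-sided uniform bounds on the selected isomorphisms, since the hypothesis only ensures that one of $\|R_k\|$ and $\|R_k^{-1}\|$ is small. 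Once this pigeonhole/rescaling step is handled, the argument reduces to essentially the same scalar-sequence calculation used in \cite{LAA} for $\ell_p(X)$, with the isomorphisms $R_k$ transporting it into the generalized direct-sum setting.
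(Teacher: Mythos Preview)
Your argument is correct and takes a genuinely simpler route than the paper's. The paper splits $\mathbb{N}$ into infinitely many disjoint pieces $\mathbb{N}_i$, spreads the mother scalar sequence $\xi$ over each piece to obtain disjointly supported $y_i$, and defines an operator $T\colon \ell_{\tilde s}(X)\to(\sum_n X_n)_p$ by $T((w_j)_j)=\sum_j y_j\otimes w_j$; since this $T$ is not bounded below, the paper must take the closure $\overline{T(\ell_{\tilde s}(X))}$ and then run a coordinatewise-convergence argument to show that every nonzero limit point still fails to lie in any $(\sum_n X_n)_q$, $q<p$. Your single embedding $T\colon X\to(\sum_n X_n)_p$ is bounded below, so its range is closed automatically and no closure or limit-chasing is needed. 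Because $\dim\ell_{\tilde s}(X)=\dim X$ for infinite-dimensional $X$, both constructions yield subspaces of the same dimension; the paper's extra machinery buys nothing for the theorem as stated, though the same template is reused for the Fr\'echet-space result (Theorem~\ref{theo2}).

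One caveat on the point you flag as the main obstacle: your pigeonhole/rescaling step does \emph{not} actually recover two-sided uniform bounds from the literal $\min$ condition, because the product $\|R_k\|\cdot\|R_k^{-1}\|$ is invariant under scalar rescaling and the $\min$ hypothesis gives no control over it. This is, however, a quirk of the paper's definition rather than a gap in your strategy: the paper's own proof immediately asserts that both $\|R_n\|\le\delta$ and $\|R_n^{-1}\|\le\delta$, so it is effectively reading the definition with $\max$ in place of $\min$ (indeed the $\min$ reading is vacuous, since one can always rescale to make $\|R_k\|\le\delta$). Under the intended two-sided bound your proof goes through exactly as written.
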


%

\begin{proof} It is plain that we can assume, without loss of generality, that the sequence $(X_n)_{n=1}^\infty$ contain isomorphs of $X$ uniformly. So there are $\delta > 0$ and isomorphisms into $R_n \colon X \longrightarrow X_n$ such that $\|R_n\| \leq \delta$ and $\|R_n^{-1}\| \leq \delta$ for every $n \in \N$.

\medskip

\noindent(a)
Let $\xi=(\xi_{j})_{j=1}^\infty\in\ell_{p}-\bigcup\limits_{0<q<p}\ell_{q}$. Split $\mathbb{N}$
into countably many infinite pairwise disjoint subsets $(\mathbb{N}_{i}%
)_{i=1}^{\infty}$. For every $i\in\mathbb{N}$ set $\mathbb{N}_{i}%
=\{i_{1}<i_{2}<\ldots\}$ and, denoting by $(e_n)_{n=1}^\infty$ the canonical unit vectors of sequence spaces, define
\[
y_{i}=\sum_{j=1}^{\infty}\xi_{j}e_{i_{j}}\in \mathbb{K}^{\mathbb{N}}.
\]
Since $\Vert y_{i}\Vert_{r}=\Vert\xi\Vert_{r}$ for every $r>0$, we have that
$y_{i}\in\ell_{p}-\bigcup\limits_{0<q<p}\ell_{q},$ for every $i$. For $x = (x_n)_{n=1}^\infty \in \mathbb{K}^\mathbb{N}$ and $w \in X$ we write
$$x \otimes w := (x_nR_n(w))_{n=1}^\infty \in \textstyle\prod\limits_{n=1}^\infty X_n.$$
It is clear that, for all $w,w_1,w_2 \in X$ and $\lambda \in \mathbb{K}$,
\begin{equation}\label{tensor}x \otimes (w_1 + w_2) = x \otimes w_1 + x \otimes w_2 {\rm ~~and~~} \lambda(x \otimes w) = (\lambda x) \otimes w = x \otimes (\lambda x),
\end{equation}
what justifies the use of the symbol $\otimes$.  Define $\tilde
{s}=1$ if $p\geq1$ and $\tilde{s}=p$ if $0<p<1$. Let $(w_{j})_{j=1}^{\infty
}\in\ell_{\tilde{s}}(X)$ be given. As $y_j \otimes w_j \in \prod\limits_{n=1}^\infty X_n$ for every $j \in \mathbb{N}$, and
\begin{align*}\|y_j \otimes w_j\|_p &= \left(\sum_{k=1}^\infty |\xi_k|^p \cdot \| R_{j_k}(w_j)\|_{X_{j_k}}^p \right)^{\frac1p} \leq \left(\sum_{k=1}^\infty |\xi_k|^p \cdot \| R_{j_k}\|^p \cdot \|w_j\|_X^p \right)^{\frac1p}\\
& \leq \delta \|w_j\|_X \cdot \|\xi\|_{p} < \infty,
\end{align*}
each $y_j \otimes w_j \in \left(\sum_n X_n \right)_p$. Moreover,
\begin{align*}
\sum_{j=1}^{\infty}\Vert y_{j} \otimes w_j\Vert_{p}^{\tilde{s}} &  \leq \sum
_{j=1}^{\infty} \left(\delta \|w_j\|_X \cdot \|\xi\|_{p}\right)^{\tilde s}= \delta^{\tilde s}\|\xi\|_{p}^{\tilde s}\cdot \|(w_j)_{j=1}^\infty\|_{\tilde s}^{\tilde s} < \infty.
\end{align*}
Thus $\sum\limits_{j=1}^{\infty}\Vert y_{j}\otimes w_j\Vert_{p}<\infty$ if $p\geq1$ and
$\sum\limits_{j=1}^{\infty}\Vert y_{j}\otimes w_j\Vert_{p}^{p}<\infty$ if $0<p<1$. Hence
the series $\sum\limits_{j=1}^{\infty}y_{j}\otimes w_j$ converges in $\left(\sum_n X_n \right)_p$ and the
operator
\[
T\colon\ell_{\tilde{s}}(X)\longrightarrow \left(\textstyle\sum_n X_n \right)_p~~,~~T\left(
(w_{j})_{j=1}^{\infty}\right)  =\sum\limits_{j=1}^{\infty}y_{j}\otimes w_j,%
\]
is well defined. From (\ref{tensor}) it follows easily that $T$ is linear, and from the fact that the sequences $(y_j)_{j=1}^\infty$ are disjointly supported it follows that $T$ is injective. Thus
$\overline{T\left(  \ell_{\tilde{s}}(X)\right)  }$ is a closed
infinite dimensional subspace of $\left(\sum_n X_n \right)_p$ and $$\text{dim}\overline
{T\left(  \ell_{\tilde{s}}(X)\right)  }=\text{dim}\ell_{\tilde{s}%
}(X)=\text{dim} X.$$ Now we just have to show that $$\overline{T\left(
\ell_{\tilde{s}}(X)\right)  }-\left\{  0\right\}  \subseteq\left(\textstyle\sum_n X_n \right)_p-\textstyle\bigcup\limits_{0<q<p}\left(\sum_n X_n \right)_q.$$ Let $z=\left(  z_{n}\right)  _{n=1}^{\infty
}\in\overline{T\left(  \ell_{\tilde{s}}(X)\right)  },$ $z\neq0$. There are
sequences $\left(  w_{i}^{(k)}\right)  _{i=1}^{\infty}\in\ell_{\tilde{s}}(X)$,
$k\in\mathbb{N}$, such that $z=\lim\limits_{k\rightarrow\infty}T\left(  \left(
w_{i}^{(k)}\right)  _{i=1}^{\infty}\right)  $ in $\left(\sum_n X_n \right)_p.$ Note that, for
each $k\in\mathbb{N}$,%
\[
T\left(  \left(  w_{i}^{(k)}\right)  _{i=1}^{\infty}\right)  =\sum
\limits_{i=1}^{\infty}y_i \otimes w_{i}^{(k)}=\sum\limits_{i=1}^{\infty}\left(\sum_{j=1}^{\infty}\xi_{j}e_{i_{j}}\right) \otimes w_{i}%
^{(k)}=\sum\limits_{i=1}^{\infty}%
\sum\limits_{j=1}^{\infty}\xi_{j}e_{i_{j}}\otimes w_{i}^{(k)},
\]
what means that, for every $i,j \in \mathbb{N}$, the $i_j$-th coordinate of $T\left(  \left(  w_{i}^{(k)}\right)  _{i=1}^{\infty}\right)$ is $\xi_j R_{i_j}(w_i^{(k)})$. Fix $r\in\mathbb{N}$ such that $z_{r}\neq0.$ Since $\mathbb{N}=\bigcup\limits
_{j=1}^{\infty}\mathbb{N}_{j}$, there are (unique) $m,t\in\mathbb{N}$ such
that $m_t=r$. Thus, for each $k\in\mathbb{N}$, the $r$-th coordinate
of $T\left(  \left(  w_{i}^{(k)}\right)  _{i=1}^{\infty}\right)  $ is
$\xi_{t}R_r(w_{m}^{(k)}).$ Since convergence in $\left(\sum_n X_n \right)_p$ implies
coordinatewise convergence, we have $z_{r}=\lim\limits_{k\rightarrow\infty}\xi_{t}R_r(w_{m}^{(k)})$. From $z_r \neq 0$ it follows that $\xi_t \neq 0$ and
\[
z_{r}=\lim_{k\rightarrow\infty}\xi_{t}R_r(w_{m}^{(k)})=\xi_{t}\cdot\lim
_{k\rightarrow\infty}R_r(w_{m}^{(k)}) \in \overline{R_r(X)} = R_r(X).
\]
Hence $R_r^{-1}(z_r) = \xi_t\cdot \lim\limits_{k \rightarrow \infty} w_m^{(k)}$. Call $$\alpha_m :=\lim\limits_{k\rightarrow\infty}w_{m}^{(k)}=\frac{R_r^{-1}(z_{r})%
}{\xi_{t}}\neq0.$$ For $j,k\in\mathbb{N}$, the $m_{j}$-th coordinate of
$T\left(  \left(  w_{i}^{(k)}\right)  _{i=1}^{\infty}\right)  $ is
$\xi_{j}R_{m_j}(w_{m}^{(k)}).$ On the one hand,
\[
\lim_{k\rightarrow\infty}\xi_{j}R_{m_j}(w_{m}^{(k)})=\xi_{j}\cdot\lim_{k\rightarrow
\infty}R_{m_j}(w_{m}^{(k)})=\xi_{j} R_{m_j}\left(\lim_{k\rightarrow
\infty}w_{m}^{(k)}\right)= \xi_j R_{m_j}(\alpha_{m})%
\]
for every $j\in\mathbb{N}$. On the other hand, coordinatewise convergence
gives $$\lim\limits_{k\rightarrow\infty}\xi_{j}R_{m_j}(w_{m}^{(k)})=z_{m_{j}},$$ so $z_{m_{j}%
}=\xi_i R_{m_j}(\alpha_{m})$ for each $j\in\mathbb{N}$.\newline Finally, for all $0<q<p$,
\begin{align*}
\left\Vert z\right\Vert _{q}^{q}&=\sum\limits_{n=1}^{\infty}\left\Vert
z_{n}\right\Vert _{X_n}^{q}\geq\sum\limits_{j=1}^{\infty}\left\Vert z_{m_{j}%
}\right\Vert _{X_{m_j}}^{q}=\sum\limits_{j=1}^{\infty}\left\vert \xi_{j}\right\vert ^{q}\cdot\left\Vert R_{m_j}(\alpha
_{m})\right\Vert _{X_{m_j}}^{q}\\&\geq \sum\limits_{j=1}^{\infty}\left\vert \xi_{j}\right\vert ^{q}\cdot\frac{1}{\|R_{m_j}^{-1}\|^q}\cdot\left\Vert \alpha
_{m}\right\Vert _{X}^{q} \geq \frac{1}{\delta^q}\left\Vert
\alpha_{m}\right\Vert _{X}^{q}\cdot\left\Vert \xi\right\Vert _{q}^{q}=\infty,
\end{align*}
proving that $z\notin\bigcup\limits_{0<q<p}\left(\sum_n X_n \right)_q$.

\medskip

\noindent (b) Start with a sequence $\xi=(\xi_{j})\in c_{0}-\bigcup\limits_{p>0}\ell_{p}$ and proceed as before to define the operator
\[
T\colon\ell_{1}(X)\longrightarrow \left(\textstyle\sum_n X_n\right)_0~~,~~T\left(  (w_{j})_{j=1}^{\infty
}\right)  =\sum\limits_{j=1}^{\infty}y_j \otimes w_{j}.%
\]
The well-definiteness of $T$ is even easier in this case. Again $T$ is linear, injective and the same steps of the proof of (a) show that
$$\overline{T\left(  \ell
_{1}(X)\right)  }-\left\{  0\right\}  \subseteq  \left(\textstyle\sum_n X_n\right)_0 -\textstyle\bigcup\limits_{p>0}\left(\textstyle\sum_n X_n\right)_p.$$
As dim$\overline{T\left(  \ell
_{1}(X)\right)  } = {\rm dim}X$, the proof is complete.
\end{proof}

Let $(X_n)_{n=1}^\infty$ be a sequence of Banach spaces and $1\leq p<+\infty.$ We define%
$$(\textstyle\sum_n X_n)_p^+ := \textstyle\bigcap\limits_{q>p}\left(\sum_n X_n \right)_q = \textstyle\bigcap\limits_{k \in \mathbb{N}}\left(\sum_n X_n \right)_{p_k}, $$
where $\left(  p_{k}\right)  _{k=1}^{\infty}$ is any decreasing sequence
converging to $p,$ endowed with the locally convex topology $\tau$ generated by the family of norms%
\[
\left\Vert \left(  x_{n}\right)  _{n=1}^{\infty}\right\Vert _{q}=\left(
\sum_{n=1}^{\infty}\left\Vert x_{n}\right\Vert _{X_n}^{q}\right)  ^{\frac{1}{q}%
}, ~q>p.\]
This locally convex topology $\tau$ is clearly generated
by the countably family of norms%
\[
\left\Vert \left(  x_{n}\right)  _{n=1}^{\infty}\right\Vert _{p_{k}}=\left(
\sum_{n=1}^{\infty}\left\Vert x_{n}\right\Vert _{X_n}^{p_{k}}\right)  ^{\frac
{1}{p_{k}}},~k \in \mathbb{N},
\]
so $((\textstyle\sum_n X_n)_p^+, \tau)  $ is metrizable. The completeness can be proved similarly to the case of $(\textstyle\sum_n X_n)_p $. Alternatively, note that $\tau$ is the projective limit topology defined by the inclusions $(\textstyle\sum_n X_n)_p^+ \hookrightarrow(\textstyle\sum_n X_n)_q  ,$ $q>p.$ So it is complete as the projective limit of complete Hausdorff spaces. In summary $((\textstyle\sum_n X_n)_p^+, \tau)  $ is a Fr\'{e}chet space.

If $X_n = X$ for every $n$, we write $\ell_p(X)^+$. In particular, for $X=\mathbb{K}$, $\ell_{p}^{+}:=\ell_{p}^{+}\left(  \mathbb{K}\right)  $
recovers the space $l^{p+}$ introduced by Metafune and Moscatelli
\cite{metafune}.

\begin{remark}\rm It is easy to see that if $X_n \neq \{0\}$ for every $n$, then the inclusions
$$(\textstyle\sum_n X_n)_p\subseteq (\textstyle\sum_n X_n)_p^+
\subseteq (\textstyle\sum_n X_n)_q$$ are strict whenever $1 \leq   p < q$.
\end{remark}

\begin{theorem}\label{theo2}
Let $(X_n)_{n=1}^\infty$ be a sequence of Banach spaces that contains a subsequence containing isomorphs of the infinite dimensional Banach space $X$ uniformly.  Then $(\textstyle\sum_n X_n)_p^{+}  -(\textstyle\sum_n X_n)_p$ is {\rm dim}$X$-spaceable.
\end{theorem}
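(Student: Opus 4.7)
The plan is to imitate the proof of Theorem \ref{theo3} but starting from a mother vector that lies in the intersection of all $\ell_q$ with $q>p$ yet outside $\ell_p$. Concretely, I would fix a scalar sequence $\xi=(\xi_j)_{j=1}^{\infty}\in\ell_p^{+}-\ell_p$ (for instance $\xi_j=j^{-1/p}(\log(j+1))^{-1}$). Reducing to the case where the full sequence $(X_n)_{n=1}^{\infty}$ contains isomorphs of $X$ uniformly, with isomorphisms $R_n\colon X\longrightarrow X_n$ satisfying $\|R_n\|,\|R_n^{-1}\|\leq\delta$, I would split $\mathbb{N}$ into infinite pairwise disjoint subsets $\mathbb{N}_i=\{i_1<i_2<\ldots\}$, form the disjointly supported spreadings $y_i=\sum_j \xi_j e_{i_j}$, and reuse the tensor notation $x\otimes w=(x_nR_n(w))_{n=1}^{\infty}$.

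Next I would define
\[
T\colon \ell_1(X)\longrightarrow (\textstyle\sum_n X_n)_p^{+}~,~T\bigl((w_j)_{j=1}^{\infty}\bigr)=\sum_{j=1}^{\infty}y_j\otimes w_j.
\]
The key bound is $\|y_j\otimes w_j\|_q\leq \delta\|\xi\|_q\|w_j\|_X$, valid for every $q>p$, so for each such $q$ one has $\sum_j\|y_j\otimes w_j\|_q\leq \delta\|\xi\|_q\|(w_j)\|_{\ell_1(X)}<\infty$. Hence the series converges in $(\sum_n X_n)_q$ for every $q>p$, and by the very definition of the projective-limit topology $\tau$, it converges in $(\sum_n X_n)_p^{+}$; the same estimate also shows $T$ is $\tau$-continuous. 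Linearity is immediate from \eqref{tensor}, and injectivity follows from the disjointness of the supports of the $y_j$ exactly as in Theorem \ref{theo3}. Since $X$ is infinite-dimensional, $\dim \ell_1(X)=\dim X$, and hence $\dim \overline{T(\ell_1(X))}=\dim X$.

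The crucial remaining step is the inclusion
\[
\overline{T(\ell_1(X))}-\{0\}\subseteq (\textstyle\sum_n X_n)_p^{+}-(\textstyle\sum_n X_n)_p.
\]
Given $z\neq0$ in the closure, pick approximating sequences $(w_i^{(k)})_{i=1}^{\infty}\in\ell_1(X)$ with $T((w_i^{(k)}))\to z$ in $\tau$. Because each inclusion $(\sum_n X_n)_p^{+}\hookrightarrow(\sum_n X_n)_q$ is continuous, the approximation also holds in any fixed $\ell_q$-norm, which in turn forces coordinatewise convergence. Choosing $r$ with $z_r\neq 0$ and the unique $m,t$ with $m_t=r$, the same argument as in Theorem \ref{theo3}(a) yields $\alpha_m:=\lim_k w_m^{(k)}=R_r^{-1}(z_r)/\xi_t\neq 0$ and the identification $z_{m_j}=\xi_j R_{m_j}(\alpha_m)$ for all $j$. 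Then
\[
\|z\|_p^p\geq \sum_{j=1}^{\infty}\|z_{m_j}\|_{X_{m_j}}^p\geq \sum_{j=1}^{\infty}|\xi_j|^p\cdot\frac{\|\alpha_m\|_X^p}{\|R_{m_j}^{-1}\|^p}\geq \frac{\|\alpha_m\|_X^p}{\delta^p}\|\xi\|_p^p=\infty,
\]
so $z\notin(\sum_n X_n)_p$, which closes the argument.

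The only genuinely new point compared with Theorem \ref{theo3} is the topological one: one has to check that convergence in the projective-limit Fréchet topology $\tau$ still delivers coordinatewise convergence, and that the series defining $T$ converges in $\tau$. Both reduce to the uniform-in-$q$ estimate $\|y_j\otimes w_j\|_q\leq \delta\|\xi\|_q\|w_j\|_X$ together with the continuity of the coordinate projections on each $(\sum_n X_n)_q$; no additional obstruction appears because the mother vector $\xi$ was chosen precisely so that $\|\xi\|_q<\infty$ for every $q>p$ while $\|\xi\|_p=\infty$.
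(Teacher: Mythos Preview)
Your proposal is correct and follows essentially the same route as the paper's proof: choose a mother vector $\xi\in\ell_p^{+}-\ell_p$, spread it along a partition of $\mathbb{N}$, define the operator $T\colon\ell_1(X)\to(\sum_n X_n)_p^{+}$ via the same tensor construction, and use coordinatewise convergence (available because $\tau$-convergence implies $\|\cdot\|_q$-convergence for each $q>p$) to show that nonzero limits of $T(\ell_1(X))$ lie outside $(\sum_n X_n)_p$. The only cosmetic difference is that the paper spells out why the limits $S_q$ of the partial sums in the various $(\sum_n X_n)_q$ coincide, whereas you appeal directly to the projective-limit description of $\tau$; both arguments are equivalent.
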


\begin{proof} The proof goes along the same steps of the proof of Theorem \ref{theo3}. We sketch the argument highlighting the differences the locally convex topology of $\left(\sum_n X_n \right)_p^+$ arises. In this proof, {\it as before} means {\it as in the proof of Theorem \ref{theo3}}.

Let $\xi=(\xi_{j})\in\ell_{p}^{+}-\ell_{p}$. Write $\N = \textstyle\bigcup\limits_{i=1}^\infty \N_i$, $\mathbb{N}_{i}=\{i_{1}<i_{2}<\ldots\}$, and define $y_j \in \mathbb{K}^{\N}$, $j \in \N$, as before. Since $\Vert y_{i}\Vert_{r}=\Vert\xi\Vert_{r}$ for every $r>0$, we have that
$y_{i}\in\ell_{p}^{+}-\ell_{p},$ for every $i$. Given $(w_{j})_{j=1}^{\infty}%
\in\ell_{1}(X)$, let us show that the series $\sum\limits_{j=1}^{\infty}y_j \otimes w_{j}$ converges in $(\textstyle\sum_n X_n)_p^{+}$, where $y_j \otimes w_j$ is defined as before. Write $s_n = \sum\limits_{j=1}^{n}y_j \otimes w_{j}$, $n \in \mathbb{N}$. For a fixed $q>p,$ the same computations we performed before show that each $y_j \otimes w_j \in (\textstyle\sum_n X_n)_q$ and
$$\sum_{j=1}^{n}\Vert y_{j} \otimes w_j\Vert_{q}\leq \delta \Vert\xi\Vert_{q}\cdot \left\Vert (w_{j}%
)_{j=1}^{\infty}\right\Vert _{1}
$$ for every $n$. As $ (\textstyle\sum_n X_n)_q$ is a Banach space, there is $S_q \in (\textstyle\sum_n X_n)_q$ such that $S_q = \lim\limits_{n \rightarrow \infty} s_n$ in $ (\textstyle\sum_n X_n)_q$. If $q,q' > p$, say $q \leq q'$, then $S_{q} \in  (\textstyle\sum_n X_n)_{q'}$ and
 $$\|s_n - S_{q}\|_{q'} \leq \| s_n - S_{q}\|_{q} \longrightarrow 0, $$
 showing that $S_q = \lim\limits_{n \rightarrow \infty} s_n$ in $ (\textstyle\sum_n X_n)_{q'}$, therefore $S_q = S_{q'}$. This shows that $S_q$ does not depend on $q$, so there is $S \in  (\textstyle\sum_n X_n)_q$ such that $s_n \longrightarrow S$ in $ (\textstyle\sum_n X_n)_q$ for every $q > p$. Hence $S \in  (\textstyle\sum_n X_n)_q^+$ and $s_n \longrightarrow S$ in the topology of $ (\textstyle\sum_n X_n)_p^+$. In other words, $\sum\limits_{j=1}^{\infty}y_{j}\otimes w_j\in (\textstyle\sum_n X_n)_p^+$ and the operator%
\[
T\colon\ell_{1}(X)\longrightarrow  (\textstyle\sum_n X_n)_p^+  ~~,~~T\left(
(w_{j})_{j=1}^{\infty}\right)  =\sum\limits_{j=1}^{\infty}y_{j}\otimes w_j%
\]
is then well defined. As before, $T$ is linear and injective. Thus
$\overline{T\left(  \ell_{1}(X)\right)  }$ is a closed dim$X$-dimensional
subspace of $ (\textstyle\sum_n X_n)_p^+$. 
Now we just have to show that $$\overline{T\left(  \ell_{1}(X)\right)
}-\left\{  0\right\}  \subseteq  (\textstyle\sum_n X_n)_p^+  -  (\textstyle\sum_n X_n)_p.$$
Let $z=\left(  z_{n}\right)  _{n=1}^{\infty}\in\overline{T\left(  \ell
_{1}(X)\right)  },$ $z\neq0$. There are sequences $w_k =\left( w_{i}^{(k)}\right)
_{i=1}^{\infty}\in\ell_{1}(X)$, $k\in\mathbb{N}$, such that $z=\lim\limits_{k\rightarrow\infty}T\left(  \left(
w_{i}^{(k)}\right)  _{i=1}^{\infty}\right)  $ in $\left(\sum_n X_n \right)_p^+$. Since the topology of $\left(\sum_n X_n \right)_p^+$ is generated by the norms $\| \cdot\|_q$, $q > p$, it follows that
\[\lim_{k \rightarrow \infty}\left\Vert
T\left(  \left(  w_{i}^{(k)}\right)  _{i=1}^{\infty}\right)  -z\right\Vert
_{q}= 0 {\rm~for~every}~ q>p.\]
Fix $q > p$ and use coordinatewise convergence in $\left(\sum_n X_n \right)_q$ as before to conclude that
 $z\notin\left(\sum_n X_n \right)_p$.
\end{proof}

Making $X_n = X$ for every $n$ in Theorems \ref{theo3} and \ref{theo2}, since dim$\ell_p(X)$ = dim$c_0(X)$ = dim$\ell_p(X)^+$= dim$X$, we get:

\begin{corollary}\label{cor}
Let $X$ be an infinite dimensional Banach space $X$. Then:\\
{\rm (a)} $\ell_p(X)- \ell_p(X)^-$ is maximal-spaceable for every $0 < p < \infty$.\\
{\rm (b)} $c_0(X) - \bigcup\limits
_{p>0}\ell_p(X)$ is maximal-spaceable.\\
{\rm (c)} $\ell_p(X)^+ - \ell_p(X)$ is maximal-spaceable for every $1 \leq p < \infty$.
\end{corollary}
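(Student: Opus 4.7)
The plan is to reduce all three statements to the already-proved Theorems \ref{theo3} and \ref{theo2} by specialising the sequence of Banach spaces to $X_n = X$ for every $n \in \mathbb{N}$. With this choice, the definitions and norms coincide with their single-space counterparts: $\left(\sum_n X_n\right)_p = \ell_p(X)$, $\left(\sum_n X_n\right)_0 = c_0(X)$, $\left(\sum_n X_n\right)_p^- = \ell_p(X)^-$, and $\left(\sum_n X_n\right)_p^+ = \ell_p(X)^+$ as Fr\'echet spaces. Consequently, items (a) and (b) should follow directly from the two parts of Theorem \ref{theo3}, while (c) will follow from Theorem \ref{theo2}, once two things are verified: the uniform-isomorphism hypothesis on $(X_n)_{n=1}^\infty$, and that the dim$X$-spaceability produced by those theorems is in fact maximal.

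To invoke the theorems I only need to verify that $(X_n)_n$ contains isomorphs of $X$ uniformly, and this is immediate: take $R_n = \mathrm{id}_X$ and $\delta = 1$, so $\|R_n\| = \|R_n^{-1}\| = 1$ and no passage to a subsequence is required. The two theorems then furnish closed subspaces of dimension dim$X$ contained (apart from the origin) in $\ell_p(X) - \ell_p(X)^-$, in $c_0(X) - \bigcup_{p>0}\ell_p(X)$, and in $\ell_p(X)^+ - \ell_p(X)$, respectively, exactly as required by (a)--(c).

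What remains is the dimension bookkeeping: I need to check that $\mathrm{dim}\,\ell_p(X) = \mathrm{dim}\,c_0(X) = \mathrm{dim}\,\ell_p(X)^+ = \mathrm{dim}\,X$, so that dim$X$-spaceable coincides with maximal-spaceable in the ambient space. The inequality $\mathrm{dim}\,X \leq \mathrm{dim}\,\ell_p(X)$ (and likewise for $c_0(X)$ and $\ell_p(X)^+$) is immediate from the canonical embedding $w \mapsto (w,0,0,\ldots)$. For the reverse inequality I would combine two standard facts: the Hamel dimension of an infinite dimensional Banach space equals its cardinality, and the cardinality of each of $\ell_p(X)$, $c_0(X)$, $\ell_p(X)^+$ is bounded by $|X|^{\aleph_0}$, which equals $|X|$ in the cases at hand (in particular, when $X$ is separable everything collapses to $\mathfrak{c}$).

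The main obstacle is this last cardinality computation, which is a point about Banach-space dimension theory rather than about the spaceability machinery itself; once it is in place, the three items of the corollary drop straight out of Theorems \ref{theo3} and \ref{theo2} with no further construction required.
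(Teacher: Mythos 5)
Your proposal is correct and takes essentially the same route as the paper: the paper's entire proof is the one-line specialization $X_n = X$ in Theorems \ref{theo3} and \ref{theo2} together with the observation that $\dim\ell_p(X) = \dim c_0(X) = \dim\ell_p(X)^+ = \dim X$. The only difference is that you also sketch a justification of this dimension identity (via the canonical embedding and the cardinality bound $|X|^{\aleph_0}$), which the paper simply asserts.
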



Of course Theorems \ref{theo3} and \ref{theo2} apply to many other interesting situations, but we refrain from going into details. 

\begin{remark}\rm Note that in the proofs of Theorems \ref{theo3} and \ref{theo2} we start with a vector $\xi$ belonging to a set other than the one we prove to be spaceable. It is this variation of the mother vector technique that allows us obtain maximal-spaceability rather than $\mathfrak{c}$-spaceability in Corollary \ref{cor}.
\end{remark}

\bigskip

\noindent {\bf Acknowledgement.} The authors thank J. M. Ansemil for drawing our attention to the spaces $\ell_p^+$ and for pointing out reference \cite{metafune}.

\vspace*{1em}

\noindent Cleon S. Barroso\\
Departamento de Matem\'atica, Campus do Pici\\
Universidade Federal do Cear\'a\\
60.455-760 -- Fortaleza -- Brazil\\
e-mail: cleonbar@mat.ufc.br

\medskip

\noindent Geraldo Botelho and Vin\'icius V. F\'avaro\\
Faculdade de Matem\'atica\\
Universidade Federal de Uberl\^andia\\
38.400-902 -- Uberl\^andia -- Brazil\\
e-mails: botelho@ufu.br, vvfavaro@gmail.com

\medskip

\noindent Daniel Pellegrino\\
Departamento de Matem\'atica\\
Universidade Federal da Para\'iba\\
58.051-900 -- Jo\~ao Pessoa -- Brazil\\
e-mail: pellegrino@pq.cnpq.br and dmpellegrino@gmail.com


\begin{thebibliography}{99}\small

\bibitem{aron} R. M. Aron, F. J. Garc\'{\i}a-Pacheco, D. P\'erez-Garc\'{\i}a, J. B. Seoane-Sep\'ulveda, {\it On dense-lineability of sets of functions on $\mathbb{R}$}, Topology {\bf 48} (2009), 149--156.

\vspace{-0.5em}

\bibitem {ARON} R. M. Aron, V. I. Gurariy and J. B. Seoane-Sep\'ulveda, {\it Lineability and
spaceability of sets of functions on $\R$}, Proc. Amer. Math. Soc. {\bf 133}
(2005), 795-–803.

\vspace{-0.5em}

\bibitem {Astala2}K. Astala, {\it On Peano's theorem in locally convex spaces},
Studia Math. \textbf{73} (1982), 213--223.

\vspace{-0.5em}


\bibitem{LAA} G. Botelho, D. Diniz, V. F\'avaro and D. Pellegrino, {\it Spaceability in Banach and quasi-Banach sequence spaces}, Linear Algebra Appl. {\bf 434} (2011), 1255--1260.

\vspace{-0.5em}

\bibitem {Cellina}A. Cellina, {\it On the nonexistence of solutions of differential
equations in nonreflexive spaces}, Bull. Amer. Math. Soc. \textbf{78} (1972), 1069--1072.

\vspace{-0.5em}

\bibitem {Dieudonne}J. Dieudonn\'e, {\it Deux examples singuliers d'equations
differentielles}, Acta Sci. Math. (Szeged) \textbf{12B} (1950), 38--40.

\vspace{-0.5em}

\bibitem{pams} J. L. G\'amez-Merino, G. A. Mu\~noz-Fern\'andez, V. M. S\'anchez,  J. B. Seoane-Sep\'ulveda, {\it Sierpi\'nski-Zygmund functions and other problems on lineability}, Proc. Amer. Math. Soc. {\bf 138} (2010), no. 11, 3863--3876.

\vspace{-0.5em}

\bibitem{sinica} F. J. Garc\'{\i}a-Pacheco and J. B. Seoane-Sep\'ulveda, {\it Vector spaces of non-measurable functions}, Acta Math. Sin. (Engl. Ser.) {\bf 22} (2006), 1805--1808.

\vspace{-0.5em}

\bibitem {Godunov1}A. N. Godunov, {\it Counterexample to Peano's theorem in
infinite-dimensional Hilbert space}, Vestnik Mosko. Univ. Ser. 1 Mat.
Mekh. [Moscow Univ. Math. Bull.], \textbf{5} (1972), 19--21.

\vspace{-0.5em}

\bibitem {Godunov2}A. N. Godunov, {\it On Peano's theorem in Banach spaces},
Funktsional. Anal. i Prilozhen. [Functional Anal. Appl.], \textbf{9} (1975), 59--60.

\vspace{-0.5em}

\bibitem{Hajek-Johanis} P. H\'ajek and M. Johanis, {\it On Peano's theorem in Banach spaces}, J. Differential Equations {\bf 249} (2010), 3342--3351.


\vspace{-0.5em}

\bibitem{timoney} D. Kitson and R. M. Timoney, {\it Operator ranges and spaceability}, J. Math. Anal. Appl. {\bf 378} (2011), 680--686.

\vspace{-0.5em}

\bibitem{Lasota-Yorke} A. Lasota and J. A. Yorke, {\it The generic property of existence of solutions of differential equations in Banach spaces}, J. Differential Equations {\bf 13} (1973), 1--12.

\vspace{-0.5em}

\bibitem {Lobanov1} S. G. Lobanov, {\it Peano's theorem is invalid for any infinite-dimensional Fr\'echt space}, (Russian) Mat. Sb. {\bf 184} (1993), 83--86; translation in Russian Acad. Sci. Sb. Math. {\bf 78} (1994), 211--214.

\vspace{-0.5em}

\bibitem {Lobanov2} S. G. Lobanov, O. G. Smolyanov, {\it Ordinary differential equations in locally convex spaces}, (Russian) Uspekhi Mat. Nauk. {\bf 49} (1994), 93--168; translation in Russian Math. Surveys {\bf 49} (1994), 97--175.

\vspace{-0.5em}

\bibitem{PAMSUCM} J. L\'opez and S. Codes, {\it Vector spaces of entire funcions of unbounded type}, Proc. Amer. Math. Soc. {\bf 139} (2011), 1347--1360.

\vspace{-0.5em}

\bibitem {metafune}G. Metafune, V. B. Moscatelli, {\it On the space $l^{p+}%
=\bigcap\limits_{q>p}l^{q}$}, Math. Nachr. \textbf{147} (1990), 7--12.

\vspace{-0.5em}

\bibitem {Shkarin1} S. A. Shkarin, {\it Peano's theorem in infinite-dimensional Fr\'echet spaces is invalid}, (Russian) Funktsional. Anal. i Prilozhen {\bf 27} (1993), 90--92; translation in Funct. Anal. Appl. {\bf 27} (1993), 149--151.

\vspace{-0.5em}

\bibitem {Shkarin2} S. A. Shkarin, {\it Peano's theorem is invalid in infinite-dimensional $F'$-spaces}, (Russian) Mat. Zametki {\bf 62} (1997), 128--137: translation in Math. Notes {\bf 62} (1997), 108--115.

\vspace{-0.5em}



\bibitem {Yorke}J. A. Yorke, {\it A continuous differential equation in Hilbert
space without existence}, Funkcial. Ekvac. \textbf{13} (1970), 19--21.


\end{thebibliography}
\end{document}